\documentclass[12pt]{amsart}

\usepackage{enumitem}
\usepackage{stmaryrd}
\usepackage{url}
\usepackage[dvipsnames]{xcolor}
\usepackage{tikz-cd}
\usepackage{graphicx}
\usepackage{amsmath}
\usepackage{color}
\usepackage{caption}
\usepackage{subcaption}
\usepackage{amsfonts,amssymb,amscd}
\usepackage{mathrsfs}
\usepackage{bm}
\usepackage{verbatim} 
\usepackage{manfnt}
\usepackage{yhmath}

\usepackage[toc,page]{appendix}

\calclayout

\newtheoremstyle{remboldstyle}
  {}{}{\itshape}{}{\bfseries}{.}{.5em}{{\thmname{#1 }}{\thmnumber{#2}}{\thmnote{ (#3)}}}
\theoremstyle{remboldstyle}

\usepackage[outdir=./]{epstopdf}

\newtheorem{thm}{Theorem}[section]
\newtheorem{prop}[thm]{Proposition}
\newtheorem{lem}[thm]{Lemma}

\newtheorem{question}{Question}

\newtheorem{thmx}{Theorem}

\theoremstyle{definition}

\newtheorem{rem}[thm]{Remark}
\newtheorem{notation}[thm]{Notation}

\usepackage[greek,english]{babel}
\usepackage[LGR,T1]{fontenc}
\usepackage[utf8]{inputenc}

\makeatletter
\DeclareFontFamily{U}{tipa}{}
\DeclareFontShape{U}{tipa}{m}{n}{<->tipa10}{}
\newcommand{\arc@char}{{\usefont{U}{tipa}{m}{n}\symbol{62}}}%

\newcommand{\arc}[1]{\mathpalette\arc@arc{#1}}

\newcommand{\arc@arc}[2]{%
  \sbox0{$\m@th#1#2$}%
  \vbox{
    \hbox{\resizebox{\wd0}{\height}{\arc@char}}
    \nointerlineskip
    \box0
  }%
}
\makeatother

\numberwithin{equation}{section}

\catcode`\@=11
\newdimen\cdsep
\def\cdstrut{\vrule height .6\cdsep width 0pt depth .4\cdsep}
\def\@cdstrut{{\advance\cdsep by 2em\cdstrut}}

\def\arrow#1#2{
  \ifx d#1
    \llap{$\scriptstyle#2$}\left\downarrow\cdstrut\right.\@cdstrut\fi
  \ifx u#1
    \llap{$\scriptstyle#2$}\left\uparrow\cdstrut\right.\@cdstrut\fi
  \ifx r#1
    \mathop{\hbox to \cdsep{\rightarrowfill}}\limits^{#2}\fi
  \ifx l#1
    \mathop{\hbox to \cdsep{\leftarrowfill}}\limits^{#2}\fi
}
\catcode`\@=12

\newcommand{\Chat}{\widehat{\mathbb{C}}}

\DeclareMathOperator{\dist}{dist}

\makeatletter
\@namedef{subjclassname@2020}{\textup{2020} Mathematics Subject Classification}
\makeatother

\begin{document}


\title[Simultaneous Approximation by Attracting Basins]{Simultaneous Approximation by Attracting Basins}

\author{Colton Fisher, Aiden Hill, Kirill Lazebnik, Palmer Thompson}
\thanks{\noindent The authors were supported by NSF grant DMS-2452130.} 





\begin{abstract} We show that any $d\geq3$ pairwise-disjoint open sets $A_1$, ..., $A_d\subset\Chat$ sharing a common boundary $J$ can be simultaneously approximated by the $d$ attracting basins $\mathcal{A}_1$, ..., $\mathcal{A}_d$ of a rational map $r$ having Fatou set $\mathcal{F}(r)=\mathcal{A}_1\sqcup...\sqcup\mathcal{A}_d$ and so that the Julia set $\mathcal{J}(r)$ approximates $J$. 
\end{abstract}


\maketitle


\section{Introduction}

The field of Complex Dynamics centers around the study of dynamical systems of the form
\begin{equation}\label{rational} r: \Chat \rightarrow \Chat \end{equation}
where $r=r(z)$ is a rational map in one complex variable (the quotient of two polynomials $r(z)=p(z)/q(z)$ having no roots in common), and $\Chat:=\mathbb{C}\cup\{\infty\}$ is the Riemann sphere, obtained by adjoining $\infty$ to the complex plane $\mathbb{C}$. The two early 20th century pioneers of the field, Pierre Fatou and Gaston Julia, both arrived at the following dynamical decomposition of $\Chat$ associated to (\ref{rational}): the \emph{Fatou set}, denoted $\mathcal{F}(r)$, consists of points in a neighborhood of which the iterates $r^n:=r\circ ... \circ r$ form a normal family in the sense of Montel (see \cite{MR4422101}, Chapter 5), and the complement of $\mathcal{F}(r)$ is called the \emph{Julia set}, denoted $\mathcal{J}(r)$. This definition lies at the center of classical and modern work in the field.



A point $\zeta\in\Chat$ for which $r(\zeta)=\zeta$ and $|r'(\zeta)|<1$ is called an \emph{attracting fixed point} of $r$. If $\zeta$ is an attracting fixed point of $r^n$ for $n\geq1$, we call $\zeta$, $r(\zeta)$, ...., $r^n(\zeta)=\zeta$ an \emph{attracting cycle}, and $\zeta$ an \emph{attracting periodic point}. The \emph{basin of attraction} $\mathcal{A}(\zeta)$ for an attracting periodic point $\zeta$ consists of points which converge, under iteration, to the attracting cycle. Systems $r$ for which $\mathcal{F}(r)$ consists only of basins of attraction are called \emph{hyperbolic}; a central conjecture in the field of complex dynamics is that hyperbolic rational maps are dense among all rational maps. 



If $r$ is hyperbolic, so that $\mathcal{F}(r)=\mathcal{A}(\zeta_1)\sqcup...\sqcup\mathcal{A}(\zeta_d)$ for attracting periodic points $\zeta_1$, ..., $\zeta_d$, then each $\mathcal{A}(\zeta_j)$ must share a common boundary = $\mathcal{J}(r)$ (see  \cite{MR1230383}, Chapter 3). The question studied in this paper is a sort of approximate converse: can any $d$ pairwise-disjoint open sets sharing a common boundary be approximated by the attracting basins of a hyperbolic map? More precisely, we study the following question posed to us by Malik Younsi:

\begin{question}\label{our_quest} Suppose $A_1$, ..., $A_d\subset\Chat$ are open sets all sharing a common boundary $J$. Does there exist an $r$ with attracting periodic points $\zeta_1$, ..., $\zeta_d$ and $\mathcal{F}(r)=\mathcal{A}(\zeta_1)\sqcup...\sqcup\mathcal{A}(\zeta_d)$ so that one has the simultaneous approximation $J \approx \mathcal{J}(r)$ and $A_j\approx \mathcal{A}(\zeta_j)$ for each $j$?
\end{question}

\noindent In this paper, the notion of approximation $\approx$ we will use is the Hausdorff metric; we recall the definition. 

\begin{notation} All distances between two points in $\Chat$ will be measured with respect to the spherical metric, denoted by $d(\cdot,\cdot)$. We denote the Hausdorff distance between two sets $A$, $B\subset\Chat$ by $d(A,B)$, and recall that $d(A,B)<\varepsilon$ if and only if $A\subset N_\varepsilon(B):=\{z \in \Chat : d(z,B)<\varepsilon\}$ and $B\subset N_\varepsilon(A)$. 
\end{notation}

\noindent Our main result is the following answer to Question \ref{our_quest}. 

\begin{thmx}\label{our_approx_result} Let $\varepsilon>0$, $d\geq2$ and $A_1$, ..., $A_d$ be disjoint open sets in $\Chat$ sharing a common boundary $J$. Then there exists a rational map $r$ with attracting fixed points $\zeta_1$, ..., $\zeta_d$ so that $\mathcal{F}(r)=\mathcal{A}(\zeta_1)\sqcup...\sqcup\mathcal{A}(\zeta_d)$ and $d_H(J,\mathcal{J})$, $d_H(A_1,\mathcal{A}_1)$, ...., $d_H(A_d,\mathcal{A}_d)$ are all $<\varepsilon$. 
\end{thmx}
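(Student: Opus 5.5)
Our plan is to reduce the problem to a finite combinatorial model and then realize that model by a rational map through quasiconformal surgery, in the spirit of polynomial and rational approximation results of Runge/Lazebnik type. First we discretize. Fix a fine square grid on the sphere of mesh $\delta\ll\varepsilon$. Since each $A_j$ is open and $\partial A_j=J$ for every $j$, every point of $J$ is within $\delta$ of every $A_j$. We choose finitely many points $p^j_1,\dots,p^j_{n_j}\in A_j$ forming a $\delta$-net of $A_j$. For each such $p^j_k$ we choose a small closed disk $D^j_k\subset A_j$ around it. We then connect the disks belonging to the same index $j$ by thin arcs inside a slightly larger neighborhood. This produces, for each $j$, a finite tree of disks $T_j$ such that $d_H(T_j,A_j)<\varepsilon/3$, the $T_j$ are pairwise disjoint, and every point of $J$ lies within $\varepsilon/3$ of each $T_j$. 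The connecting arcs may cross $J$, but the $A_j$ need not be connected, and $J$ is only approximated in the Hausdorff sense. We choose the arcs so that the trees stay disjoint. This is possible because the complement of finitely many disjoint trees in $\Chat$ is connected, so arcs can be rerouted. The case $d=2$ is allowed here.

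Next we build a topological model. We want a branched covering $g:\Chat\to\Chat$ with $d$ superattracting fixed points $\zeta_j\in T_j$. Each neighborhood $U_j\supset T_j$ should be mapped properly into itself with all critical points of $g$ landing in $\bigcup_j T_j$. Off $\bigcup U_j$, the map $g$ should be expanding with respect to some orbifold metric, and its preimage structure should be so fine that the Julia set lies within $\varepsilon/3$ of $J$. Concretely, we take $g$ on each $U_j$ to be a high-degree map $z\mapsto z^{m}$ in suitable coordinates, collapsing $U_j$ into a small disk around $\zeta_j$. We then fill in the complementary region $\Chat\setminus\bigcup_j U_j$, a finitely connected domain with thin boundary, by a covering of the "annular" complement. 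The grid guarantees that every component of the complement meets the boundary of every $U_j$. We then use this fact so that each preimage of $\Chat\setminus\bigcup_j \overline{U_j}$ is small, of diameter $<\delta$, and is adjacent to all $d$ basins.

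Finally we straighten. Using Shishikura/David-type surgery, or more simply the Thurston-free argument in which the $\bar\partial$-distortion is supported on a set of small area, we pull back the standard structure on $\bigcup U_j$ and apply the Measurable Riemann Mapping Theorem. This yields a quasiconformal $\phi$ with $r=\phi\circ g\circ\phi^{-1}$ rational. Normalizing $\phi$ with dilatation supported on a set of small area gives $\phi$ close to the identity. Hyperbolicity follows because all critical orbits are absorbed by the attracting fixed points $\phi(\zeta_j)$. Hence $\mathcal F(r)=\bigsqcup \mathcal A(\phi(\zeta_j))$, with $\mathcal A_j\supset \phi(T_j)$. Every Julia point lies in an iterated preimage of the small complementary pieces, each of which is adjacent to all $d$ basins, so $\mathcal J(r)\subset N_\varepsilon(J)$ and $\mathcal A_j\subset N_\varepsilon(A_j)$. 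The reverse inclusions come from $T_j\subset\mathcal A_j$ together with the density of the net.

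The main obstacle is the second step: building $g$ so that the complementary region has preimages that are uniformly small and each touch all $d$ basins. This is what forces $\mathcal J$ to stay near $J$ and prevents a basin from spilling far from $A_j$. Our first attempt will be to imitate the "boundary-approximation by Blaschke-type products on each component plus gluing" construction. The degree of the covering on each $U_j$ is chosen large and equidistributed along $\partial U_j$, so that the preimage pieces are spaced at scale $\delta$. We then control the quasiconformal error by making the interpolation annuli thin.
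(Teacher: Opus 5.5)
Your proposal has a genuine gap, and it is exactly where you place the ``main obstacle''. Step 2 is the whole content of the theorem, and it is not carried out. You need a map $g$ that, for every $j$ simultaneously, sends essentially all of $A_j$ away from $J$ into a small disc about $\zeta_j$, and all of whose critical points eventually fall into $\bigcup_j U_j$. Setting $g=z^m$ on $U_j$ and ``filling in'' the complement by some covering whose preimage pieces are small, equidistributed and each adjacent to all $d$ basins describes the output you want. It does not construct it, and nothing in your discretization shows such a branched cover exists.

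The straightening step has a second hole. The $g$-invariant Beltrami coefficient is supported on the full backward orbit $\bigcup_{n\geq 0} g^{-n}(E)$ of the non-holomorphic region $E$, not on $E$ alone. Thin interpolation annuli therefore do not by themselves give small area. Without a Shishikura-type bound on visits to $E$, you do not even get $\|\mu\|_\infty<1$. Without these, $\phi$ need not be close to the identity, and the Hausdorff estimates for $g$ do not transfer to $r$.

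Finally, step 1 is false as stated. Connected trees $T_j$ with $d_H(T_j,A_j)<\varepsilon/3$ need not exist: take $d=2$, $A_1$ two far-apart discs, and $A_2$ the complement of their closures. Rerouting arcs through the complement of the other trees destroys the Hausdorff bound.

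The missing idea, which makes a topological model unnecessary, is to control critical values by post-composition. The paper shrinks each $A_j$ to a set $A_j^\varepsilon$ so that the closures are pairwise disjoint. Here $A_j^\varepsilon$ consists of the points at distance $>\varepsilon$ from $J$, finitely many small discs near $J$ from the covering lemma, and a disc $D(\lambda\xi_j,\delta)$ near $\infty\in J$. Runge's theorem, applied to the piecewise-constant function $z\mapsto\lambda\xi_j$ on $A_j^\varepsilon$, then gives a rational $R$ with $R(A_j^\varepsilon)\subset D(\lambda\xi_j,\delta)$.

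The paper then sets $S_n=P_\lambda^n\circ R$, where $P_\lambda(z)=\lambda P(z/\lambda)$ and $P(z)=z^d+\frac{d}{d-1}z$. All $d$ critical points $\lambda\xi_j$ of $P_\lambda$, including $\infty$, are fixed. $R$ is first replaced by $(1+\eta)R$ so that $\mathrm{CV}(R)$ misses $\mathcal{J}(P_\lambda)$, which has zero area. Then for large $n$ every critical value of $S_n$ lies in a tiny disc about some $\lambda\xi_j$, hence in the basin of the attracting fixed point $\zeta_j$ of $S_n$. So $S_n$ is hyperbolic with exactly $d$ basins. The Hausdorff estimates follow directly from $A_j^\varepsilon\subset\mathcal{A}_j$ and from the fact that every point of $J$ is close to two different sets $A_j^\varepsilon$.
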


Theorem \ref{our_approx_result} was influenced by some closely related work in the literature which we now briefly survey. Under the assumption that $d=2$ and $J$ is a Jordan curve, Theorem \ref{our_approx_result} is a result of \cite{MR3377290}. When $d=2$ and $J$ is a pairwise-disjoint, finite collection of mutually-exterior Jordan curves, Theorem \ref{our_approx_result} is a result of \cite{MR3955554}. In both \cite{MR3377290}, \cite{MR3955554}, $r$ may be taken to be a polynomial (provided $J$ does not contain $\infty$). When $d=2$ and $J$ is a pairwise-disjoint, finite collection of Jordan curves (not necessarily mutually exterior), Theorem \ref{our_approx_result} is a result of \cite{MR4880202}, in which case $r$ must be non-polynomial in general (by a result of \cite{MR3955554}). 

Other recent relevant work includes a study of the rate of approximation in Theorem \ref{our_approx_result} (for $d=2$ and $J$ a pairwise-disjoint, finite collection of mutually-exterior Jordan curves) in \cite{MR3794102}. Approximation by dendrite Julia sets was studied in \cite{MR3420484}. Related work in the transcendental setting includes \cite{MR4375923} and \cite{MR4846778}. 

Our main contribution is in being able to simultaneously approximate $d>2$ open sets as in Theorem \ref{our_approx_result}; this requires some different ideas from the $d=2$ case. To highlight the difference between $d=2$ and $d>2$, let us remark that while it is not difficult for $d=2$ disjoint open sets to share a common boundary (for instance the two complementary components of any Jordan curve), the common boundary of $d>2$ open sets must be comparatively intricate; examples include the Lakes of Wada and attracting basins of many Newton maps. 

A sketch of our proof of Theorem \ref{our_approx_result} is as follows. We replace the given $A_1$, ..., $A_d$ by open subsets $A_1^\varepsilon\subset A_1$, ..., $A_d^\varepsilon\subset A_d$ which still approximate $A_1$, ..., $A_d$ but now $\overline{A_1^\varepsilon}$, ..., $\overline{A_d^\varepsilon}$ can be properly separated by pairwise-disjoint open sets. This means that Runge's Theorem applied to a piecewise-constant function $z\mapsto w_j$ in $A_j^\varepsilon$ produces a rational approximation $R$ to this piecewise-constant function, and the constants $w_j$ may also be chosen to lie inside the open sets $A_j^\varepsilon$. In this way we can guarantee attracting fixed points $w_1$, ..., $w_d$ containing each of $A_1^\varepsilon$, ..., $A_d^\varepsilon$ in their basin of attraction, but we need to work harder to guarantee that there are no other Fatou components. To this end, we consider $r:=P_n\circ R$, where $P_n$ is the $n$th iterate of a polynomial whose only critical points are $w_1$, ..., $w_d$, and so that each of $w_1$, ..., $w_d$ are fixed by $P$. After ensuring that no critical values of $R$ lie in $\mathcal{J}(P_n)$, one is able to deduce that for all large enough $n$, all critical values of $P_n\circ R$ lie in basins of attraction of (perturbations of) the attracting fixed points $w_1$, ... $w_d$; this ensures that $P_n\circ R$ has no other Fatou components. In Section \ref{main_section} we will provide the details.




\vspace{5mm}

\noindent \emph{Acknowledgements.} We thank Malik Younsi for posing Question \ref{our_quest} to us.

\section{Proof of Theorem \ref{our_approx_result}.}\label{main_section}

\begin{notation} Let $A_1$, ..., $A_d$ be $d\geq2$ pairwise-disjoint open sets sharing a common boundary $J$, so that $\Chat=A_1\sqcup...\sqcup A_d\sqcup J$.  We will assume without loss of generality that $\infty\in J$.
\end{notation}

\begin{lem}\label{covering_lemma} Let $A\subset\Chat$ be open and $\varepsilon>0$. Then there exists an open subset $B^\varepsilon\subset A$ with $\partial B^{\varepsilon}\cap \partial A=\emptyset$ so that for every $z\in\partial A$, one has $D(z,\varepsilon)\cap B^\varepsilon\not=\emptyset$.
\end{lem}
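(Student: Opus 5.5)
Since $\partial A$ is closed in the compact sphere $\Chat$, it is compact, and I would use this to build $B^\varepsilon$ as a finite union of small closed-ish disks placed inside $A$ near $\partial A$. (If $\partial A=\emptyset$ the statement is vacuous apart from $B^\varepsilon\subset A$; take $B^\varepsilon=\emptyset$, or $B^\varepsilon=A$ when $A$ is $\emptyset$ or $\Chat$, both having empty boundary.)

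\textbf{Step 1: reduce to finitely many points.} The open disks $D(z,\varepsilon/2)$, $z\in\partial A$, cover $\partial A$. By compactness there are finitely many $z_1,\dots,z_m\in\partial A$ with $\partial A\subset\bigcup_k D(z_k,\varepsilon/2)$.

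\textbf{Step 2: place a disk inside $A$ near each $z_k$.} Since $z_k\in\partial A=\overline{A}\setminus A$ ($A$ open), there is a point $a_k\in A$ with $d(a_k,z_k)<\varepsilon/2$. Because $A$ is open, choose $\rho_k>0$ with $\overline{D(a_k,2\rho_k)}\subset A$. Set
\[ B^\varepsilon:=\bigcup_{k=1}^m D(a_k,\rho_k). \]
This is open and contained in $A$.

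\textbf{Step 3: check the two properties.} For $z\in\partial A$, pick $k$ with $d(z,z_k)<\varepsilon/2$. Then $d(z,a_k)<\varepsilon$, so $a_k\in D(z,\varepsilon)\cap B^\varepsilon$. For the boundary condition, $\partial B^\varepsilon\subset\overline{B^\varepsilon}\subset\bigcup_k\overline{D(a_k,\rho_k)}\subset A$. Since $A$ is open, $A\cap\partial A=\emptyset$, hence $\partial B^\varepsilon\cap\partial A=\emptyset$.

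There is no serious obstacle. The only point needing care is the boundary-disjointness, and choosing the disks with closures compactly inside $A$ handles it. This works because the union is finite: the closure of a finite union is the union of the closures. An infinite union of disks could accumulate on $\partial A$, which is exactly why compactness is used. One also uses that spherical disks of small radius have closures contained in the slightly larger disk, which is immediate.
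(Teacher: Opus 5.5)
Your proof is correct and follows essentially the same route as the paper: compactness of $\partial A$ gives finitely many points, and $B^\varepsilon$ is a finite union of small disks in $A$ whose closures stay inside $A$. The paper's condition $D(w(z),2r_{w(z)})\subset A$ plays the role of your $\overline{D(a_k,2\rho_k)}\subset A$, and the only cosmetic difference is that the paper covers $\partial A$ directly by the disks $D(w(z),\varepsilon)$ centered at the interior points, which avoids your $\varepsilon/2$ triangle-inequality step.
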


\begin{proof} For each $z\in\partial A$, choose $w(z) \in A\cap D(z,\varepsilon)$ and $r_{w(z)}>0$ so that $D(w(z),2r_{w(z)})\subset A\cap D(z,\varepsilon)$. Then 
\[ \partial A\subset \cup_{z\in\partial A} D(w(z),\varepsilon) \] 
so that by compactness of $\partial A$, there exists a finite subcover 
\[ \partial A\subset \cup_{j=1}^n D(w(z_j), \varepsilon). \] 
The set $B^\varepsilon:=\cup_{j=1}^nD(w(z_j), r_{w(z_j)})$ has the desired properties.
\end{proof}

\begin{notation}
We let 
\begin{equation} P(z):=z^{d}+\frac{d}{d-1}z, \end{equation}
and $P_\lambda(z):=\lambda P(z/\lambda)$ for $\lambda>0$.
\end{notation}

\begin{rem} It is readily checked that $P$ has a fixed critical point at $\infty$, and $d-1$ finite critical points each of which are fixed by $P$; we will denote these $d$ fixed critical points (including $\infty$) by $\xi_1$, $\xi_2$, ..., $\xi_{d}$. Thus $P_\lambda$ has $d$ fixed critical points $\lambda\xi_1$, $\lambda\xi_2$, ..., $\lambda\xi_d$. Let $\varepsilon>0$ and $B_j^\varepsilon\subset A_j$ be the open set obtained by applying Lemma \ref{covering_lemma} to $A_j$. Recall $\infty\in J$. We fix $\lambda\gg 0$ so that $d(\lambda\xi_j, \infty)<\varepsilon/2$ for each $j$, and so that the points $\lambda\xi_j$ are not contained in $\cup_jB_j^\varepsilon$. We fix $\delta>0$ so that the discs $D(\lambda\xi_j, \delta)$ are pairwise-disjoint and also disjoint from $B_j^\varepsilon$, and moreover 
\begin{equation}\label{delta_contracting} P_\lambda(D(\lambda\xi_j, \delta)) \subset D(\lambda\xi_j, \delta/2),
\end{equation}
for each $j$, where we remark that (\ref{delta_contracting}) holds for sufficiently small $\delta$ since each $\lambda\xi_j$ is an attracting fixed point of $P_\lambda$. 
\end{rem}

\begin{notation} For $\varepsilon>0$, we denote by 
\begin{equation} A_j^\varepsilon := \{z \in A_j : \dist(z, J)>\varepsilon\}\cup B_j^\varepsilon \cup D(\lambda\xi_j, \delta).
\end{equation} 
\end{notation}

\begin{lem}\label{good_haus_approx} We have that $d_H(A_j^\varepsilon, A_j)\xrightarrow{\varepsilon\rightarrow0}0$.
\end{lem}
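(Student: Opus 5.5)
The plan is to bound the Hausdorff distance by a constant multiple of $\varepsilon$ directly: I will show $A_j^\varepsilon\subset N_{2\varepsilon}(A_j)$ and $A_j\subset N_{2\varepsilon}(A_j^\varepsilon)$. Two facts will be used throughout. First, $\partial A_j=J$ for every $j$, because the $A_j$ share the common boundary $J$. Second, $J$ is compact. I will also use that $\delta$ may be taken smaller than $\varepsilon/2$; the conditions on $\delta$ in the Remark only require $\delta$ to be small, so this can be imposed without loss. Note that $\lambda$ and $\delta$ depend on $\varepsilon$.

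\emph{First inclusion, $A_j^\varepsilon\subset N_{2\varepsilon}(A_j)$.} The piece $\{z\in A_j:\dist(z,J)>\varepsilon\}$ is contained in $A_j$, and $B_j^\varepsilon\subset A_j$ by Lemma \ref{covering_lemma}. The only piece that can leave $A_j$ is the disc $D(\lambda\xi_j,\delta)$. For this piece I will use that $\infty\in J=\partial A_j$, so $\infty$ lies in the closure of $A_j$. The Remark fixes $\lambda$ with $d(\lambda\xi_j,\infty)<\varepsilon/2$. Hence every $z\in D(\lambda\xi_j,\delta)$ satisfies
\[ d(z,\infty)<\tfrac{\varepsilon}{2}+\delta<\varepsilon, \]
and therefore $d(z,A_j)<2\varepsilon$.

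\emph{Second inclusion, $A_j\subset N_{2\varepsilon}(A_j^\varepsilon)$.} Let $z\in A_j$. If $\dist(z,J)>\varepsilon$, then $z\in A_j^\varepsilon$ and there is nothing to prove. Otherwise, by compactness of $J$ there is a point $p\in J$ with $d(z,p)\le\varepsilon$. Since $p\in J=\partial A_j$, Lemma \ref{covering_lemma} (applied to $A=A_j$, giving $B_j^\varepsilon$) provides a point $w\in D(p,\varepsilon)\cap B_j^\varepsilon\subset A_j^\varepsilon$. Then $d(z,w)<2\varepsilon$.

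Combining the two inclusions gives $d_H(A_j^\varepsilon,A_j)\le 2\varepsilon\to0$. The one step that needs care is the first inclusion, because of the auxiliary discs $D(\lambda\xi_j,\delta)$, which are not contained in $A_j$. It is handled precisely by two choices already made in the Remark: $\infty$ was placed in $J$, and $\lambda$ (and then $\delta$) were chosen in terms of $\varepsilon$ so that these discs lie near $\infty$.
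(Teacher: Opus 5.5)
Your proof is correct and follows the same route as the paper. You handle the two inclusions exactly as the paper does: the auxiliary disc $D(\lambda\xi_j,\delta)$ is controlled via $\infty\in\partial A_j$, and the points of $A_j$ near $J$ are controlled via Lemma~\ref{covering_lemma}. Your version only makes the paper's ``close'' quantitative (a bound of $2\varepsilon$) and states explicitly the extra requirement $\delta<\varepsilon/2$, which is compatible with the other conditions on $\delta$ and which the paper leaves implicit.
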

\begin{proof} Let us first show that each point $z\in A_j^\varepsilon$ is close to $A_j$. Well if $z\in B_j^\varepsilon$ then $z\in A_j$, and if $z\in D(\lambda\xi_j, \delta)$ then $z$ is close to $\infty\in\partial A_j$ so $z$ is close to $A_j$. 

Now let us show that each point $z\in A_j$ is close to $A_j^\varepsilon$. If $\dist(z, J)>\varepsilon$ then evidently $z\in A_j^\varepsilon$, and if $\dist(z, J)<\varepsilon$ then $z$ is close to $B_j^\varepsilon\subset A_j^\varepsilon$ by Lemma \ref{covering_lemma}. 
\end{proof}

\begin{rem} For a rational map $R:\Chat\rightarrow\Chat$, we denote the critical points of $R$ by $\textrm{CP}(R)$, and the critical values of $R$ by $\textrm{CV}(R)$.
\end{rem}

\begin{prop}\label{approximation_step} For all $\varepsilon>0$, there exists a rational map $R$ so that 
\begin{equation}\label{classical_runge} R(A_j^\varepsilon)\subset D(\lambda\xi_j, \delta).
\end{equation}
Moreover, $R$ may be chosen so that 
\begin{equation}\label{nocvinjulia} \emph{CV}(R)\cap\mathcal{J}(P_\lambda)=\emptyset.
\end{equation}
\end{prop}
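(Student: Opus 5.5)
The plan is to apply Runge's theorem to a piecewise-constant function on a compact neighborhood of the closures $\overline{A_j^\varepsilon}$, and then perturb the resulting rational map slightly to push its critical values off $\mathcal{J}(P_\lambda)$.

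\textbf{Separating the closures.} The closures $\overline{A_j^\varepsilon}$ are pairwise disjoint compact subsets of $\Chat$. Indeed, $\overline{\{z\in A_j:\dist(z,J)>\varepsilon\}}\subset\{z\in A_j : \dist(z,J)\geq\varepsilon\}$. By the construction in Lemma \ref{covering_lemma}, $\overline{B_j^\varepsilon}$ is a finite union of closed discs contained in $A_j$. The discs $\overline{D(\lambda\xi_j,\delta)}$ are pairwise disjoint and disjoint from the $\overline{B_k^\varepsilon}$; shrinking $\delta$ if necessary, they are also disjoint from $\{\dist(\cdot,J)\geq\varepsilon\}$, since $\lambda\xi_j$ is close to $\infty\in J$. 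I would take $\varepsilon$ small relative to $\dist(\lambda\xi_j, J)$ if that is needed. Hence there are pairwise-disjoint open neighborhoods $U_j\supset\overline{A_j^\varepsilon}$. Let $K_j\subset U_j$ be compact neighborhoods of $\overline{A_j^\varepsilon}$ and $K=\bigsqcup_j K_j$. Since $K\neq\Chat$ (it misses, e.g., points of $J$ away from the discs), I pick a point $p_j$ in each bounded complementary component of $K$, finitely many after fattening suitably, or simply allow poles in a dense set as in Runge's theorem on $\Chat$.

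\textbf{Applying Runge.} The function $f\equiv\lambda\xi_j$ on $K_j$ is holomorphic on a neighborhood of $K$. Runge's theorem gives a rational $R_0$ with poles off $K$ and $|R_0-f|<\delta/4$ on $K$, measured spherically, which is fine since the $\lambda\xi_j$ are finite for $\lambda\xi_j\neq\infty$. The one index with $\xi_j=\infty$ needs care: either precompose and postcompose with a Möbius map, or replace the target $\infty$ by a large finite point inside $D(\infty,\delta)$. This yields \eqref{classical_runge} with margin, i.e.\ $R_0(K_j)\subset D(\lambda\xi_j,\delta/2)$.

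\textbf{Avoiding the Julia set.} This step is the main obstacle. $\mathcal{J}(P_\lambda)$ is a compact set with empty interior, and $\mathrm{CV}(R_0)$ is finite. Consider the family $R_t=R_0+t$, or more robustly $R_{a,b}=aR_0+b$, for small parameters. The critical points of $R_0+t$ equal those of $R_0$, at least the finite ones, so its critical values are $\mathrm{CV}(R_0)+t$. The set of $t$ with some $v+t\in\mathcal{J}(P_\lambda)$ is a finite union of translates of $\mathcal{J}(P_\lambda)$. This union is closed with empty interior, so there are arbitrarily small $t$ avoiding it. The value $\infty$ is a concern: if $\infty$ is a critical value it lies in the Fatou set of $P_\lambda$, since $\infty$ is a superattracting fixed point, so that case is harmless. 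Taking $|t|$ small preserves $R_t(K_j)\subset D(\lambda\xi_j,\delta)$ for the finite targets. For the target $\infty$, translation keeps a neighborhood of $\infty$ near $\infty$, so the margin persists there too. Setting $R=R_t$ gives \eqref{nocvinjulia} together with \eqref{classical_runge}.
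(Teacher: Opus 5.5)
Your proof is correct. The Runge step is the same as the paper's, and you supply details the paper leaves implicit:
\begin{itemize}
\item the closures $\overline{A_j^\varepsilon}$ are pairwise disjoint once $\delta<\varepsilon/2$, because $d(\lambda\xi_j,\infty)<\varepsilon/2$;
\item the target $\infty$ has to be replaced by a large finite value before Runge's theorem can be applied.
\end{itemize}
Your aside about taking $\varepsilon$ small relative to $\dist(\lambda\xi_j,J)$ is unnecessary, and would be circular since $\lambda$ is chosen after $\varepsilon$; the condition $\delta<\varepsilon/2$ already suffices.

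The genuine difference is the perturbation step. The paper uses the multiplicative family $(1+\eta)S$. It argues that if every small $\eta$ failed, $\mathcal{J}(P_\lambda)$ would have positive area, contradicting the zero-area theorem for hyperbolic Julia sets. You use the additive family $R_0+t$. You only need that $\mathcal{J}(P_\lambda)$ is closed with empty interior, so the bad set $\bigcup_{v}(\mathcal{J}(P_\lambda)-v)$ is a finite union of nowhere dense sets and arbitrarily small good $t$ exist.

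Your route is more elementary: it needs no hyperbolicity or area estimate, only that the Fatou set is nonempty. It is also more robust. Scaling leaves a critical value at $0$ fixed, and $0$ is a repelling fixed point of $P_\lambda$ (since $P_\lambda'(0)=d/(d-1)>1$), so $0\in\mathcal{J}(P_\lambda)$. The paper's argument as written therefore overlooks the case $0\in\mathrm{CV}(S)$. Translation moves every finite critical value, and the only one it fixes, $\infty$, lies in the superattracting basin of $P_\lambda$, as you note.
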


\begin{proof} The piecewise-defined (piecewise-constant) function $z\mapsto \lambda\xi_j$ in $A_j^\varepsilon$ is holomorphic in a neighborhood of $\cup_j \overline{A_j^\varepsilon}$, so Runge's Theorem applies to produce a rational $S$ satisfying 
\begin{equation}\label{class2} S(A_j^\varepsilon)\subset D(\lambda\xi_j, \delta/2).\end{equation}
If $\textrm{CV}(S)\cap\mathcal{J}(P_\lambda)=\emptyset$, we set $R:=S$ and we are finished. 


If $\textrm{CV}(S)\cap\mathcal{J}(P_\lambda)\not=\emptyset$, then consider $R_\eta:=(1+\eta)S$ where $\eta$ is a $\mathbb{C}$-parameter with $|\eta|$ small. By (\ref{class2}), We have that (\ref{classical_runge}) still holds with $R:=R_\eta$ for all sufficiently small $|\eta|$, and moreover we claim that there exists $\eta$ so that 
\begin{equation}\label{SBWOC} \textrm{CV}(R_\eta)\cap\mathcal{J}(P_\lambda)=\emptyset. \end{equation}

Indeed, note that for each critical value $\zeta$ of $R_0=S$, the map $R_\eta$ has critical value $(1+\eta)\zeta$, and $(1+\eta)\zeta$ traces out a small ball centered at $\zeta$ for variable $\eta$. Thus, if we suppose by way of contradiction that (\ref{SBWOC}) fails for all small $\eta$, this would mean that $\mathcal{J}(P_\lambda)$ has positive area, and this is impossible since $P_\lambda$ is hyperbolic as all critical points are fixed (Julia sets of hyperbolic polynomials have zero area: see for instance Theorem V.2.3 of \cite{MR1230383}).
\end{proof}

\begin{notation} We will consider the mapping
\begin{equation} S_n:=P_\lambda^n\circ R,
\end{equation}
where $P_\lambda^n$ denotes the $n^{\textrm{th}}$ iterate of $P_\lambda$.
\end{notation}

\begin{lem}\label{afpfors} For all $n$, the map $S_n$ has an attracting fixed point $\zeta_j \in D(\lambda\xi_j, \delta)$ for each $j$, and moreover the disc $D(\lambda\xi_j, \delta/2)$ is contained inside the basin of attraction for $\zeta_j$.
\end{lem}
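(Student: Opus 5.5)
Write $D_j:=D(\lambda\xi_j,\delta)$. The plan is to show that $S_n$ maps $\overline{D_j}$ strictly into $D_j$. Then a Schwarz-lemma (or hyperbolic-metric) argument gives a unique attracting fixed point, and its basin contains all of $D_j$, in particular $D(\lambda\xi_j,\delta/2)$.

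\emph{Step 1: the self-map.} By construction $D_j\subset A_j^\varepsilon$, so (\ref{classical_runge}) gives $R(D_j)\subset D_j$. By (\ref{delta_contracting}), $P_\lambda(D_j)\subset D(\lambda\xi_j,\delta/2)\subset D_j$, and iterating gives $P_\lambda^n(D_j)\subset D(\lambda\xi_j,\delta/2)$ for every $n\geq 1$. Therefore, for $n\geq1$,
\[ S_n(D_j)=P_\lambda^n(R(D_j))\subset P_\lambda^n(D_j)\subset D(\lambda\xi_j,\delta/2). \]
The closure of $D(\lambda\xi_j,\delta/2)$ is a compact subset of $D_j$.

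\emph{Step 2: the fixed point and its basin.} Conjugate $D_j$ to the unit disc by a Möbius map. This is harmless even when $\lambda\xi_j=\infty$, since $D_j$ is a spherical disc. Then $S_n$ becomes a holomorphic self-map of the disc with image in a compact subdisc. By the Schwarz--Pick lemma, such a map is a strict contraction of the hyperbolic metric with contraction factor uniformly less than $1$ on the compact image. The Banach fixed point theorem then gives a unique fixed point $\zeta_j$ in the image, hence $\zeta_j\in D(\lambda\xi_j,\delta/2)\subset D_j$. The same contraction gives $|S_n'(\zeta_j)|<1$ in the conjugated coordinate. Multipliers are conjugacy-invariant, so $\zeta_j$ is attracting. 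Finally, every point $z\in D_j$ has iterates $S_n^k(z)\to\zeta_j$ by the contraction. So all of $D_j$, in particular $D(\lambda\xi_j,\delta/2)$, lies in the basin of $\zeta_j$.

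\emph{Step 3: the case $n=0$ and the main obstacle.} If $n=0$ is allowed, then $S_0=R$. In that case use (\ref{class2}) with $R=(1+\eta)S$ for small $|\eta|$; one should check in the proof of Proposition~\ref{approximation_step} that $R(A_j^\varepsilon)$ is compactly contained in $D_j$. The only delicate point is this compact containment, which is needed for a strict contraction rather than merely a weak one. For $n\geq1$ it comes for free from (\ref{delta_contracting}), so I expect no real obstacle beyond the bookkeeping of the chart at $\infty$.
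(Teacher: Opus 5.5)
Your proposal is correct and follows essentially the same route as the paper. Both combine (\ref{classical_runge}) and (\ref{delta_contracting}) to get $S_n(D(\lambda\xi_j,\delta))\subset D(\lambda\xi_j,\delta/2)$, then use a hyperbolic-metric contraction to obtain the attracting fixed point and show the disc lies in its basin; you additionally supply details the paper leaves implicit (the contraction on the full disc via Banach's theorem, the Möbius chart at $\infty$, and the observation that $n\geq 1$ is what gives compact containment of the image).
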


\begin{proof} By (\ref{classical_runge}), we have that 
\[ R(D(\lambda\xi_j, \delta))\subset D(\lambda\xi_j, \delta),\] 
and so by (\ref{delta_contracting}) we conclude that 
\[ S_n(D(\lambda\xi_j, \delta)) \subset D(\lambda\xi_j, \delta/2).\]
Thus $S_n$ has an attracting fixed point $\zeta_j \in D(\lambda\xi_j, \delta/2)$. 

Moreover, since 
\[ S_n(D(\lambda\xi_j, \delta/2)) \subset D(\lambda\xi_j, \delta/2),\]
the map $S_n$ is a strict contraction with respect to the hyperbolic metric on $D(\lambda\xi_j, \delta/2)$, and so the orbits of all points in $D(\lambda\xi_j, \delta/2)$ must converge to a common limit, and since $\zeta_j\in D(\lambda\xi_j, \delta/2)$ is an attracting fixed point, this common limit must be $\zeta_j$.
\end{proof}

\begin{notation} We will continue to denote the attracting fixed points of $S_n$ in the conclusion of Lemma \ref{afpfors} by $\zeta_1$, ..., $\zeta_d$. 
\end{notation}

\begin{rem}\label{standard_but_deep} The proof of Proposition \ref{no_other_basins_prop} below relies on the following standard but deep fact: if all critical values of a rational map $r$ lie in a basin of attraction of an attracting fixed point, then the Fatou set $\mathcal{F}(r)$ consists only of these basins of attraction. Indeed, the components of $\mathcal{F}(r)$ must be pre-periodic by Sullivan's no wandering domain Theorem, and pre-periodic components admit a classification (see Chapter 15 of \cite{MilnorCDBook}); moreover every pre-periodic component in this classification must have an associated critical value orbit (for detailed statements, see Theorems 8.6 and 11.17, Corollary 10.11 and Lemma 15.7 of [Mil06]). Thus, if all critical values of $r$ lie in a basin of attraction, there can be no other Fatou components apart from these basins of attraction. 
\end{rem}

\begin{prop}\label{no_other_basins_prop} For all sufficiently large $n$, the Fatou set of the map $S_n$ consists only of the attracting basins for $\zeta_1$, ..., $\zeta_d$.
\end{prop}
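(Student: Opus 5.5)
Following Remark \ref{standard_but_deep}, it suffices to show that for all large $n$ every critical value of $S_n=P_\lambda^n\circ R$ lies in the basin of attraction of one of $\zeta_1,\dots,\zeta_d$. By Lemma \ref{afpfors}, it is enough that every critical value lands in $\bigcup_j D(\lambda\xi_j,\delta/2)$, or more generally is eventually mapped there by $S_n$.

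First identify the critical values. By the chain rule,
\[ \textrm{CP}(S_n)=\textrm{CP}(R)\cup R^{-1}\big(\textrm{CP}(P_\lambda^n)\big). \]
Hence
\[ \textrm{CV}(S_n)=P_\lambda^n(\textrm{CV}(R))\cup \textrm{CV}(P_\lambda^n). \]
Since the only critical points of $P_\lambda$ are the fixed points $\lambda\xi_j$, the critical points of $P_\lambda^n$ are the iterated preimages of the $\lambda\xi_j$, and their images under $P_\lambda^n$ are exactly the points $\lambda\xi_j$. Each $\lambda\xi_j$ lies in $D(\lambda\xi_j,\delta/2)$ and hence in the basin of $\zeta_j$. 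So the second family of critical values is harmless for every $n$.

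The main step concerns the finitely many points of $\textrm{CV}(R)$, which do not depend on $n$. By (\ref{nocvinjulia}), each $v\in\textrm{CV}(R)$ lies in the Fatou set of $P_\lambda$. Since $P_\lambda$ is hyperbolic with attracting fixed points $\lambda\xi_1,\dots,\lambda\xi_d$, and these are its only critical points, Remark \ref{standard_but_deep} applied to $P_\lambda$ shows that $\mathcal{F}(P_\lambda)$ is the union of the basins of the $\lambda\xi_j$. Therefore $P_\lambda^n(v)\to\lambda\xi_{j(v)}$ for some index $j(v)$. As $\textrm{CV}(R)$ is finite, there is an $N$ such that $P_\lambda^n(v)\in D(\lambda\xi_{j(v)},\delta/2)$ for all $n\ge N$ and all $v\in\textrm{CV}(R)$. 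For such $n$, each critical value $P_\lambda^n(v)$ of $S_n$ lies in the basin of $\zeta_{j(v)}$ by Lemma \ref{afpfors}.

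I expect the main obstacle to be bookkeeping rather than analysis. One point to check is that Remark \ref{standard_but_deep} is applied correctly when the critical values lie in several distinct basins rather than one; the same argument works, since every periodic Fatou component must still absorb a critical orbit. Another is that the point $\infty$ (one of the $\lambda\xi_j$) and the critical values at $\infty$ are handled consistently. With these checked, the Fatou set of $S_n$ equals the union of the basins of $\zeta_1,\dots,\zeta_d$ for all $n\ge N$.
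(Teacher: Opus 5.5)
Your proposal is correct and follows the paper's proof essentially step for step: you identify $\textrm{CV}(S_n)$ as $P_\lambda^n(\textrm{CV}(R))$ together with the fixed points $\lambda\xi_j$, and you use (\ref{nocvinjulia}) plus the fact that $\mathcal{F}(P_\lambda)$ is the union of the basins of the $\lambda\xi_j$ to push the finitely many points $P_\lambda^n(\textrm{CV}(R))$ into $\bigcup_j D(\lambda\xi_j,\delta/2)$ for large $n$, then conclude with Lemma \ref{afpfors} and Remark \ref{standard_but_deep}. Your extra bookkeeping only makes explicit what the paper leaves implicit: the chain-rule description of $\textrm{CP}(S_n)$, the uniform $N$ over the finite set $\textrm{CV}(R)$, and the observation that Remark \ref{standard_but_deep} allows several basins.
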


\begin{proof} Since each critical point of $P_\lambda$ is fixed, $\mathcal{F}(P_\lambda)$ consists only of the attracting basins for $\lambda\xi_1$, ..., $\lambda\xi_d$. By (\ref{nocvinjulia}),  it follows that each critical value of $R$ is contained in one of these attracting basins for $\lambda\xi_1$, ..., $\lambda\xi_d$. Thus, by taking $n$ large enough, we have that $P_\lambda^n(\textrm{CV}(R))$ is contained in a union of $d$ discs of arbitrarily small radius centered at $\lambda\xi_1$, ..., $\lambda\xi_d$. 

The critical values of $S_n$ consist of $P_\lambda^n(\textrm{CV}(R))$ together with the critical values $\lambda\xi_1$, ..., $\lambda\xi_d$ of $P_\lambda$; thus for all sufficiently large $n$, we have that the critical values of $S_n$ are contained in discs of arbitrarily small radius centered at one of the $\lambda\xi_j$. Hence, by Lemma \ref{afpfors}, for all large $n$ we have that $\textrm{CV}(S_n)$ lies in the union of the attracting basins for $\zeta_j$ under $S_n$. Hence, recalling Remark \ref{standard_but_deep}, we see that there can be no other Fatou components for $S_n$ other than the attracting basins for the $\zeta_j$.
\end{proof}

\begin{notation} We denote the attracting basins for the attracting fixed points $\zeta_1$, ..., $\zeta_d$ of $S_n$ by $\mathcal{A}_1$, ..., $\mathcal{A}_{d}$.
\end{notation}

\noindent Theorem \ref{our_approx_result} follows directly from the following.

\begin{thm} Let $\varepsilon>0$. For all sufficiently large $n$, the Fatou set of the map $S_n$ consists of the $d$ attracting basins $\mathcal{A}_1$, ..., $\mathcal{A}_{d}$ with common boundary $\mathcal{J}:=\mathcal{J}(S_n)$, and 
\begin{equation}\label{haus_ineq} d_H(A_j,\mathcal{A}_j)<\varepsilon \text{\;\;\;for\;\;\;}j=1,2,...,d \text{\;\;\;and\;\;\;}d_H(J,\mathcal{J})<\varepsilon.
\end{equation}
\end{thm}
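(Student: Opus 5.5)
The plan is to work with the auxiliary parameter $\varepsilon'=\varepsilon/3$. We run the entire construction with $\varepsilon'$: Lemma \ref{covering_lemma}, the choice of $\lambda$ and $\delta$, the sets $A_j^{\varepsilon'}$, and Proposition \ref{approximation_step}. The first assertion of the theorem is Proposition \ref{no_other_basins_prop} for all large $n$. The common boundary statement is then the standard fact, recalled in the introduction, that basins of a hyperbolic map share the boundary $\mathcal{J}(S_n)$. It remains to prove (\ref{haus_ineq}).

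The key step is the containment $A_j^{\varepsilon'}\subset\mathcal{A}_j$ for every $n$. By (\ref{classical_runge}) we have $R(A_j^{\varepsilon'})\subset D(\lambda\xi_j,\delta)$. Then (\ref{delta_contracting}) gives $S_n(A_j^{\varepsilon'})\subset D(\lambda\xi_j,\delta/2)$, and this disc lies in the basin of $\zeta_j$ by Lemma \ref{afpfors}. Hence $A_j^{\varepsilon'}$ lies in the basin, and the $A_j^{\varepsilon'}$ for distinct $j$ land in distinct, disjoint basins. With this containment, the four required inclusions come out as follows.

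\begin{enumerate}
\item $A_j\subset N_{\varepsilon}(\mathcal{A}_j)$. Every $z\in A_j$ is either in $A_j^{\varepsilon'}$ or within $\varepsilon'$ of $J$. In the latter case Lemma \ref{covering_lemma} gives a point of $B_j^{\varepsilon'}\subset\mathcal{A}_j$ within $2\varepsilon'$ of $z$.
\item $\mathcal{A}_j\subset N_\varepsilon(A_j)$. Let $z\in\mathcal{A}_j$. If $z\in A_k$ with $\dist(z,J)>\varepsilon'$, then $z\in A_k^{\varepsilon'}\subset\mathcal{A}_k$, which forces $k=j$. Otherwise $\dist(z,J)\le\varepsilon'$ (this includes $z\in J$). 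Since $J=\partial A_j$, the point $z$ is then within $2\varepsilon'$ of $A_j$.
\item $\mathcal{J}\subset N_\varepsilon(J)$. Points of $\mathcal{J}$ lie in no $A_k^{\varepsilon'}\subset\mathcal{F}(S_n)$, so by the decomposition $\Chat=A_1\sqcup\dots\sqcup A_d\sqcup J$ they lie within $\varepsilon'$ of $J$.
\item $J\subset N_\varepsilon(\mathcal{J})$. Fix $z\in J$. Using $d\ge2$ and Lemma \ref{covering_lemma}, pick $u\in B_1^{\varepsilon'}\cap D(z,\varepsilon')\subset\mathcal{A}_1$ and $v\in B_2^{\varepsilon'}\cap D(z,\varepsilon')\subset\mathcal{A}_2$. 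Join $u$ to $v$ by a path inside the disc $D(z,\varepsilon')$. Because $\mathcal{A}_1$ and $\mathcal{A}_2$ are disjoint open sets, the path must exit $\mathcal{A}_1$. It therefore meets $\partial\mathcal{A}_1\subset\mathcal{J}$, so $\mathcal{J}\cap D(z,\varepsilon')\neq\emptyset$.
\end{enumerate}

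I expect the main obstacle to be the second bound, that the basin $\mathcal{A}_j$ does not spill far beyond $A_j$. The Runge approximation only controls $S_n$ on $\bigcup_k A_k^{\varepsilon'}$, not in the thin band around $J$. The resolution is the disjointness argument in the second item: the other basins $\mathcal{A}_k$ already swallow all of $A_k$ away from $J$. Consequently $\mathcal{A}_j$ can only escape $A_j^{\varepsilon'}$ into an $\varepsilon'$-neighbourhood of $J=\partial A_j$, which is harmless for the Hausdorff bound.
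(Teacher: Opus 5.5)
Your proof is correct and follows the paper's route. The steps match: the containment $A_j^{\varepsilon}\subset\mathcal{A}_j$ comes from the Runge step together with Lemma \ref{afpfors}; the inclusions near $J$ come from the covering lemma and the decomposition $\Chat=A_1\sqcup\dots\sqcup A_d\sqcup J$; and $J\subset N_\varepsilon(\mathcal{J})$ comes from connecting points of two different basins inside a disc around $z\in J$. Your bookkeeping with $\varepsilon'=\varepsilon/3$ and your case split on $\dist(z,J)>\varepsilon'$ in item 2 are slightly tidier than the paper, which passes over the fact that $A_j^\varepsilon\not\subset A_j$ because of the disc $D(\lambda\xi_j,\delta)$.
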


\begin{proof} The first assertion was proven in Proposition \ref{no_other_basins_prop}; it remains to prove (\ref{haus_ineq}).

Let $z\in J$. Then $D(z,\varepsilon)$ must intersect two different $A_j^\varepsilon$, and each $A_j^\varepsilon$ is contained inside a different component of $\mathcal{F}(S_n)$ by Proposition \ref{approximation_step}. Thus $D(z,\varepsilon)$ must contain a point in $\mathcal{J}=\mathcal{J}(S_n)$. This proves that each point $z\in J$ is close to $\mathcal{J}$. Conversely, if $z\in\mathcal{J}$, then Proposition \ref{approximation_step} ensures that $z\not\in\cup_j A_j^\varepsilon$, and hence $z$ is close to $J$.


It remains to prove that $d_H(A_j,\mathcal{A}_j)<\varepsilon$. Let $z\in A_j$. By Lemma \ref{good_haus_approx}, $z$ is close to $A_j^\varepsilon$. Since $A_j^\varepsilon\subset \mathcal{A}_j$, we have that $z$ is close to $\mathcal{A}_j$. Conversely, if $z\in\mathcal{A}_j\setminus A_j$, then $z\in\mathcal{A}_j\setminus\cup_jA_j^\varepsilon$ (since each $A_j^\varepsilon$ is in the basin of attraction for $\zeta_j$). Thus there is a point inside $J=\partial A_j$ close to $z$, and hence a point inside $A_j$ close to $z$. This proves each $z\in \mathcal{A}_j$ is close to $A_j$ and hence $d_H(A_j,\mathcal{A}_j)$ is small. 
\end{proof}


\end{document}